\newtheorem{theorem}{Theorem}[section]
\newtheorem{proposition}[theorem]{Proposition}
\newtheorem{corollary}[theorem]{Corollary}
\newtheorem{assumption}{Assumption A}
\theoremstyle{definition}
\begin{document}
	
\title{Spectral Projected  Subgradient Method for Nonsmooth Convex Optimization Problems}
\author{Nata\v sa Kreji\' c\footnote{Department of Mathematics and Informatics, Faculty of Sciences, University of Novi
Sad, Trg Dositeja Obradovi\' ca 4, 21000 Novi Sad, Serbia. e-mail: \texttt{natasak@uns.ac.rs}}, Nata\v sa Krklec Jerinki\' c,\footnote{Department of Mathematics and Informatics, Faculty of Sciences, University of Novi Sad, Trg Dositeja Obradovi\' ca 4, 21000 Novi Sad, Serbia. e-mail: \texttt{natasa.krklec@dmi.uns.ac.rs}} Tijana Ostoji\' c \footnote{Department of Fundamental Sciences, Faculty of Technical Sciences, University of Novi
Sad, Trg Dositeja Obradovi\' ca 6, 21000 Novi Sad, Serbia. e-mail: \texttt{tijana.ostojic@uns.ac.rs}} }
\date{August 8, 2022}
\maketitle

\begin{abstract}
We consider constrained optimization problems with a nonsmooth objective function in the form of mathematical expectation. The Sample Average Approximation (SAA) is used to estimate the objective function and variable sample size strategy is employed.  The proposed algorithm combines an SAA subgradient with the spectral coefficient in order to provide a suitable direction which improves the performance of the first order method as shown by numerical results. The step sizes are chosen from the predefined interval and the almost sure convergence of the method is proved under the standard assumptions in stochastic environment. To enhance the performance of the proposed algorithm, we further specify the choice of the step size by introducing an Armijo-like procedure  adapted to this framework. Considering the computational cost on machine learning problems, we conclude that the line search improves the performance significantly.   Numerical experiments conducted on finite sum problems also reveal that the variable sample strategy outperforms the full sample approach. 
\end{abstract}

\textbf{Key words:} 
Nonsmooth Optimization, Subgradient, Spectral Projected Gradient Methods, Sample Average Approximation, Variable Sample Size Methods, Line Search.
%The following code is not run because of the percentage sign, but you might find it useful for future work
% \tableofcontents

\section{Introduction}
We consider the following constrained optimization problem 
\begin{equation} 
\min_{x\in\Omega} f(x)=E(F(x,\xi)),
\label{prob1}
\end{equation}
where $\Omega\subset\mathbb{R}^n$ is a convex, closed set, $F:\mathbb{R}^n \times \mathbb{R}^m \rightarrow \mathbb{R}$ is continuous and convex function with respect to $x$,   
bounded from below,  $\xi:\mathcal{A}\rightarrow \mathbb{R}^m$ is random vector and $(\mathcal{A}, \mathcal{F}, P)$  is a probability space. Notice that F is locally Lipschitz 
as a consequence of convexity, \cite{BKM14}, but possibly  nonsmooth. The importance of the stochastic optimization problem arising from various  scientific fields generated a large amount  of literature in the recent years. Due to difficulty in computing the mathematical expectation in general, the common approach is to approximate the original objective function $f(x)$ by applying  the Sample Average Approximation (SAA) function 

\begin{equation} 
f_{\mathcal{N}}(x)=\frac{1}{N}\sum_{i\in\mathcal{N}}f_i(x),
\label{SAA}
\end{equation}
where $ f_i(x)=F(x,\xi_i) $ and $ N=|\mathcal{N}| $ determines the size of a sample used for approximation. The sample vectors  $\xi_i, i\in\mathcal{N}$ are assumed to be independent and identically distributed (i.i.d.). 

The sample size $N$ determines the  precision of the approximation, but it also influences the computational  cost. In order to achieve the a.s. convergence, one needs to push the sample size to infinity in general. Even if the original problem is already in the SAA form, i.e., if we are dealing with finite sum problems, the costs of employing the full sample at each iteration can be large and thus the variable sample size (VSS) strategy is often applied.  Finding a good way of  varying  the sample size as well as choosing a sample is a problem itself and it has been the subject of many research efforts  (e.g. \cite{BCT}, \cite{SBNKNKJ}, \cite{HT2}, \cite{NKNKJ}). Although this may affect the algorithm a lot, a suitable sample size strategy will not be the main concern of this paper. In order to prove a.s. convergence we assume here that the sample size tends to infinity and leave the problem of determining the optimal VSS strategy in this framework for future work.

In this paper we propose a framework for solving nonsmooth constrained optimization problem \eqref{prob1} assuming that the feasible set $\Omega$ is easy to project on (for example a box or ball in $\mathbb{R}^n$). This allows us to apply a method of the Spectral Projected Gradient type.
The Spectral Projected Gradient (SPG) method, originally proposed in \cite{Birgin1}, is  well known for its efficiency and simplicity and it has been widely used and developed as a solver of constrained optimization problems \cite{greta}, \cite{GRMS}, \cite{NKNKJ}, \cite{tan}. The step length selection strategy in SPG method is crucial for faster convergence with respect to classical gradient projection methods because it involves second-order information related to the spectrum of the Hessian matrix.
SPG methods for finite sums problem have been investigated in \cite{greta}, \cite{tan}. In \cite{tan} they are used in combination with the stochastic gradient method and the convergence is proved assuming that the  full gradient is calculated in  every $m$ iterations.  In \cite{greta} the subsampled spectral gradient methods are analyzed and the effect of the choice of the spectral coefficient is investigated.
In \cite{GRMS} the SPG direction is employed within Inexact Restoration framework to address nonlinear optimization problems with nonconvex constraints. The SPG methods for problems with continuously differentiable objective function given in form of mathematical expectation have been analyzed in \cite{NKNKJ}.

Another important class of  nonsmooth optimization methods are bundle methods, especially if accuracy in the solution and reliability are a concern (see \cite{surveymm}, \cite{birdeye} and the references therein). The main idea of bundle methods is to make an approximation of the whole subdifferential of the objective function instead of using only one arbitrary subgradient at each point. The advantage of these methods compared to the classical subgradient methods is that they use more information about the local behavior of the function by approximating the subdifferential of the objective function with subgradients from previous iterations that form a bundle. A great number of bundle methods in combination with Newton-type methods \cite{LV98} and the trust-region method \cite{ANR16} have been developed. The modification of bundle methods has been used for nonconvex problems \cite{Mif82}, constrained problems \cite{SS05} and multi-criteria problems \cite{Mie98}, as well. The drawback of these methods is requirement of solving at least one quadratic programming subproblem in each iteration, which can be time-consuming, especially for large scale problems. Proximal bundle methods (see for example \cite{LS97}, \cite{misa} \cite{MN92}) are based on the bundle methodology and have the ability to provide exact solutions even if most of the time the available information is inaccurate, unlike their forerunner variants. 
The proximal bundle method takes ideas from subgradient method and proximal method \cite{senior}, \cite{birdeye}. First one can be seen as extension of gradient methods in smooth optimization and the second one is a variant of proximal point method, which minimizes the original function plus a quadratic part.  

The method proposed in this paper  is a subgradient method but differs from the existing ones in the literature in several ways. We propose a way to plug VSS-SPG ideas into the nonsmooth framework. Since the objective function may be nonsmooth, we have to use subgradients instead of gradients and thus we refer to the core algorithm as SPS - Spectral Projected Subgradient method. The spectral coefficient is calculated by employing consecutive subgradients of possibly different SAA functions and the safeguard which provides positive, bounded spectral coefficients is used. We prove a.s. convergence under the standard assumptions for stochastic environment.  Moreover, in order to improve the performance of the algorithm, we also propose a line search variant of SPS named LS-SPS.  By specifying the line search technique we ensure that LS-SPS falls into the SPS framework and thus the same convergence results hold. Although the descent property of the search direction is desirable, it is not necessary in each iteration to ensure the convergence result. The proposed line search is well defined and the a.s. convergence is achieved even if the search direction is not a descent one for the SAA function. 

Although the proposed algorithms are constructed to cope with unbounded sample sizes, they can also be applied on finite sum problems and we devote a part of considerations to this important special class as well. Preliminary numerical results on machine learning problems modeled by Hinge Loss functions reveal several advantages of the proposed method: 
a) spectral method outperforms the plain subgradient method; 
b) VSS method reduces the costs of the full sample method when applicable; 
c) line search improves the  behavior of the SPS method with predefined step size sequence. 

Therefore, the main contributions of this work are the following: 
\begin{itemize}
\item[i)] The stochastic SPG method is adapted to the  nonsmooth framework; 
\item[ii)] The a.s. convergence of the proposed SPS method is proved under the standard assumptions; 
\item[iii)] The SPS is further upgraded by introducing  a specific line search technique resulting in LS-SPS; 
\item[iv)] Numerical results on machine learning problems show the efficiency of the proposed method, especially LS-SPS. 
\end{itemize}

This paper is structured as follows. Details of the proposed algorithm are presented in the next section. Section 3 deals with the convergence of the resulting algorithm and also introduces a line search algorithm. In Section 4 we provide some implementation details and discuss results of the relevant numerical experiments.  We conclude in Section 5.

\section{The Method}

In this section, we describe the subsampled spectral projected subgradient framework algorithm for nonsmooth probems  - SPS. For any  given $z \in \mathbb{R}^n$,  we denote by $P_{\Omega}(z)$ the orthogonal projection of $z$ onto $\Omega.$ Throughout the paper $\mathcal{N}_k$ denotes the sample used to approximate the objective function and $N_k$ denotes its cardinality.

\noindent {\bf Algorithm 1: SPS} \\({\bf S}pectral {\bf P}rojected {\bf S}ubgradient Method for Nonsmooth Optimization)
\label{SPGNS}
\begin{itemize}
\item[S0] \textit{Initialization.} Given $N_0 \in \mathbb{N},$ $x_0\in\Omega,$ $0<C_1 <1< C_2< \infty, $ $0 <  \underline{\zeta}\leq  \overline{\zeta}< \infty,$ $\zeta_0 \in \left[\underline{\zeta},\overline{\zeta}\right].$ Set k = 0.

\item[S1] \textit{Direction.} Choose  $\bar{g}_{k}\in  \partial f_{\mathcal{N}_{k}}(x_k)$ and set $p_{k}=-\zeta_k \bar{g}_{k}$. 

\item[S2] \textit{Step size.} Choose $\alpha_k \in \left[ C_1/k,C_2/k\right]$.

\item[S3] \textit{Main update.} Set $x_{k+1} = P_{\Omega}(x_k+\alpha_k  p_{k})$ and  $s_k = x_{k+1}-x_k$.

\item[S4] \textit{Sample size update.} Chose $ N_{k+1} \in \mathbb{N} $.

\item[S5] \textit{Spectral coefficient update.} Calculate $y_k = g_{\mathcal{N}_{k}}(x_{k+1})- \bar{g}_{k}$ where $g_{\mathcal{N}_k}(x_{k+1}) \in \partial f_{\mathcal{N}_{k}}(x_{k+1})$.
Set 
$\zeta_{k+1} = \min \lbrace\overline{\zeta},\max\lbrace\underline{\zeta},\frac{s_k^T s_k}{s_k^T y_k}\rbrace\rbrace.$  
%where $a_k=s_k^T s_k$ and $b_k=s_k^T y_k$. 
 
\item[S6] Set $k:=k+1$ and go to S1. 
\end{itemize}

Let us now comment the algorithm above.  In Step S1 we calculate the direction by choosing a subgradient of the SAA function $f_{\mathcal{N}_k}$ and taking the opposite direction multiplied by the spectral coefficient. Notice that the safeguard in Step S5 ensures that the negative subgradient direction is retained. However, this direction does not have to be descent for the function $f_{\mathcal{N}_k}$ since we take an arbitrary subgradient. 

Within Step S2 we choose the step size $\alpha_k$ from the given interval. The constants $C_1$ and $C_2$ can be arbitrary small and large, respectively, allowing a wide range of feasible step sizes. This choice was motivated by the common assumption on the step size sequence in stochastic algorithms:
\begin{equation}
 \sum_{k=1}^{\infty}\alpha_k^2 <\infty, \quad \sum_{k=1}^{\infty}\alpha_k=\infty.
\label{ss_rule}
\end{equation}
Notice that the choice in S2 ensures that the sequence of step sizes of SPS algorithm satisfies \eqref{ss_rule}. 
After finding the direction and the step size, we project the point $x_{k} +\alpha_k p_k$ onto the feasible set $\Omega$ and thus we retain feasibility in all the iterations of the algorithm. 

In Step S4 we chose the sample size to be used in the subsequent iteration. In order to prove convergence result, we will assume that $N_k$ tends to infinity or achieves and retains at the maximal sample size in the case of finite sums. So, the simplest way to ensure this is to increase the sample size at each iteration. However, we state this step in the most generic way to emphasize that other choices are feasible as well, including some adaptive strategies. 

In Step S5 $y_k$ is calculated as a difference of two subgradients of the same approximate function $f_{\mathcal{N}_k}$, but different approaches are feasible as well. For instance, one can use subgradients of different functions $y_k = g_{\mathcal{N}_{k+1}}(x_{k+1})- g_{\mathcal{N}_k}(x_k)$. This can reduce the costs, especially if the sample is not cumulative, but also brings additional  noise into the spectral coefficient since the subgradients are calculated for two different functions in general.  However, if we have a finite sum problem  and the full sample is reached, the cost of calculating the subgradient may be reduced to one subgradient per iteration since one can obviously take $\bar{g}_{k+1}=g_{\mathcal{N}_{k}}(x_{k+1})$. In general, another choice could be  $y_k = g_{\mathcal{N}_{k+1}}(x_{k+1})- g_{\mathcal{N}_{k+1}}(x_k).$ This reduces the influence of a noise and usually provides  better approximation of the spectral coefficient of the true objective function, but it requires additional evaluations. Although the choice of $y_k$ was addressed in the literature (see \cite{greta} for example), in general  it remains an open question which requires thorough analysis before drawing the final conclusions.
It is important to point out that the choice of $y_k$ does not affect the convergence analysis and the theoretical results obtained in next section, but it may affect the algorithm's performance significantly.

\section{Convergence}

In this section we analyze conditions needed for a.s.  convergence of the SPS algorithm. A standard assumptions for stochastic environment is stated below.  Recall that samples are assumed to be i.i.d.

\begin{assumption}
Assume that $f_i(x)=F(x,\xi_i),$ $i=1,2,\ldots,$ are continuous, convex and bounded from below with a constant $C.$  Moreover, assume that the function $F$ is dominated by a P-integrable function on any compact subset of $\mathbb{R}^n$.
\label{pp_Lip}
\end{assumption}

Notice that Assumption A\ref{pp_Lip} implies that $ f $ is convex and continuous function as well as $ f_\mathcal{N} $ for any given $N$. 
Moreover, the Uniform Law of Large Numbers (ULLN) implies  that $f_\mathcal{N}(x)$ a.s converges uniformly  to $f(x)$ on any compact subset  $S \subseteq \mathbb{R}^n$ (see Theorem 7.48 in \cite{shapiro} for instance), i.e., 
\begin{equation}
 \lim_{N \rightarrow \infty}\sup_{x \in S}|f_{N}(x)-f(x)|=0 \quad \mbox{a.s.}   
 \label{Shapiro-ULLN}
 \end{equation}
Also notice that \eqref{Shapiro-ULLN} holds trivially if the sample is finite and the full sample is eventually achieved and retained.

The main result, a.s. convergence of Algorithm SPS, is stated in the following theorem. We assume that the feasible set is compact, although this assumption may be relaxed as we will show in the sequel. Moreover, recall that the convexity implies that the functions $f_i$ are locally Lipschitz continuous  and thus for every $x$ and $i$ there exists $L_i(x)$ such that for all $g \in \partial f_{i}(x) $ there holds $\|g\| \leq L_i(x)$. However, since there can be infinitely many functions $f_i$ in general we assume that the chosen subgradients are uniformly bounded. This may be accomplished by scaling the subgradient by its norm for instance. 
Let $X^*$ and $f^*$ be the set of solutions and the optimal value of problem  \eqref{prob1}, respectively.  
The convergence result is as follows. 

\begin{theorem}
Suppose that Assumption A\ref{pp_Lip} holds and $\lbrace x_k \rbrace$ is a sequence generated by Algorithm SPS where $N_k\to\infty$.  Assume also that $\Omega$ is  compact and convex and there exists $G$ such that $|| \bar{g}_k ||\leq G$ for all $k$. Then
\begin{equation}
\label{t1a}
\liminf_{k \to \infty} f(x_k)=f^* \quad \text{a.s.}.
\end{equation}
 Moreover,  
 \begin{equation}
\label{t1b}
\lim_{k\rightarrow\infty} x_k=x^*  \quad \text{a.s.}
\end{equation}
 for some $x^* \in X^*,$ provided that $\sum_{k=0}^{\infty}  e_k /k <\infty$, where $e_k:=\max_{x\in \Omega}|f_{\mathcal{N}_k}(x)- f(x)|.$

\label{Th1}
\end{theorem}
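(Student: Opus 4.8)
The plan is to control the squared distance $d_k:=\|x_k-x^*\|^2$ to an arbitrary solution $x^*\in X^*$ (note $X^*\neq\emptyset$ by compactness of $\Omega$ and continuity of $f$), and to extract both conclusions from a single Fej\'er-type recursion. Since $P_{\Omega}$ is nonexpansive and fixes $x^*$, and $p_k=-\zeta_k\bar g_k$, expanding the square gives
\[
d_{k+1}\le \|x_k+\alpha_k p_k-x^*\|^2 = d_k - 2\alpha_k\zeta_k\,\bar g_k^T(x_k-x^*)+\alpha_k^2\zeta_k^2\|\bar g_k\|^2 .
\]
Applying the subgradient inequality $\bar g_k^T(x_k-x^*)\ge f_{\mathcal{N}_k}(x_k)-f_{\mathcal{N}_k}(x^*)$, the uniform bound $|f_{\mathcal{N}_k}(x)-f(x)|\le e_k$ at the feasible points $x_k,x^*$, and the safeguards $\|\bar g_k\|\le G$, $\zeta_k\in[\underline\zeta,\overline\zeta]$, I obtain the master recursion
\[
d_{k+1}\le d_k - 2\alpha_k\zeta_k\,(f(x_k)-f^*) + 4\alpha_k\zeta_k e_k + \alpha_k^2\zeta_k^2 G^2 .
\]
The two facts I will use repeatedly, both forced by $\alpha_k\in[C_1/k,C_2/k]$ and $\zeta_k\in[\underline\zeta,\overline\zeta]$, are $\sum_k\alpha_k\zeta_k=\infty$ and $\sum_k\alpha_k^2\zeta_k^2<\infty$.

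To prove \eqref{t1a}, note first that $f(x_k)\ge f^*$ since $x_k\in\Omega$, so $\liminf_k f(x_k)\ge f^*$ holds trivially; I argue the reverse by contradiction. Apply the ULLN \eqref{Shapiro-ULLN} on the compact set $\Omega$: together with $N_k\to\infty$ it gives $e_k\to0$ on a probability-one event, and I work pathwise there. If $\liminf_k f(x_k)>f^*$ along such a path, there is $\varepsilon>0$ with $f(x_k)-f^*\ge\varepsilon$ for all large $k$; taking $k$ also large enough that $e_k<\varepsilon/4$, the master recursion collapses to $d_{k+1}\le d_k-\alpha_k\zeta_k\varepsilon+\alpha_k^2\zeta_k^2G^2$. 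Summing and using $\sum_k\alpha_k\zeta_k=\infty$ with $\sum_k\alpha_k^2\zeta_k^2<\infty$ would force $d_k\to-\infty$, which is impossible for a nonnegative sequence. Hence $\liminf_k f(x_k)=f^*$ almost surely.

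For \eqref{t1b} I bring in the extra hypothesis $\sum_k e_k/k<\infty$. Since $4\alpha_k\zeta_k e_k\le 4\overline\zeta C_2\,e_k/k$ and $\alpha_k^2\zeta_k^2G^2\le C_2^2\overline\zeta^2G^2/k^2$, the nonnegative perturbation $\beta_k:=4\alpha_k\zeta_k e_k+\alpha_k^2\zeta_k^2G^2$ is summable, so the recursion reads $d_{k+1}\le d_k-2\alpha_k\zeta_k(f(x_k)-f^*)+\beta_k$ with a nonnegative middle term and $\sum_k\beta_k<\infty$. By the standard deterministic convergence lemma for such quasi-Fej\'er sequences, $d_k$ converges to a finite limit for each fixed $x^*\in X^*$, and moreover $\sum_k\alpha_k\zeta_k(f(x_k)-f^*)<\infty$. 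Using compactness of $\Omega$, I pick a subsequence $x_{k_j}\to\bar x$ realizing $f(x_{k_j})\to f^*$ (consistent with \eqref{t1a}); by continuity $f(\bar x)=f^*$, so $\bar x\in X^*$. Specializing the fixed solution to $x^*=\bar x$, the sequence $\|x_k-\bar x\|^2$ converges, while along $k_j$ it tends to $0$; hence its limit is $0$ and $x_k\to\bar x\in X^*$ almost surely.

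The nonexpansiveness expansion and the subgradient inequality are routine. The delicate point is the interface between the stochastic and deterministic layers: because the $e_k$ are random, every estimate must be read pathwise on the full-measure event where the ULLN gives $e_k\to0$ (for \eqref{t1a}) or where $\sum_k e_k/k<\infty$ (for \eqref{t1b}). I expect the main obstacle to be the liminf argument for \eqref{t1a}, where, lacking summability of the noise, the perturbation $4\alpha_k\zeta_k e_k$ must be absorbed into the descent term $2\alpha_k\zeta_k\varepsilon$ purely through the eventual smallness $e_k<\varepsilon/4$ with $\varepsilon$ depending on the path; keeping the quantifiers in the right order is the subtle step.
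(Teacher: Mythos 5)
Your proposal is correct and follows essentially the same route as the paper: the same nonexpansivity-plus-subgradient recursion for $\|x_k-\cdot\|^2$, a pathwise contradiction using $e_k\to 0$ and $\sum_k\alpha_k=\infty$ for \eqref{t1a}, and a Fej\'er-monotonicity argument anchored at a subsequential limit in $X^*$ for \eqref{t1b}. The only cosmetic differences are that you measure distance to a true minimizer $x^*$ rather than to the paper's auxiliary point $\tilde y$ with $f(\tilde y)=f^*+\varepsilon$, and you invoke the standard quasi-Fej\'er convergence lemma where the paper writes out the tail-sum estimate explicitly; both are valid.
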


\begin{proof}

Denote by $\mathcal{W}$ the set of all possible sample paths of SPS algorithm. Suppose that \eqref{t1a}  does not hold, i.e.,  $\liminf_{k \to \infty} f(x_k)=f^* $ does not happen with probability 1. In that case there exists a subset  of sample paths $\tilde{\mathcal{W}}\subseteq \mathcal{W}$ such that $ P(\tilde{\mathcal{W}})>0$ and  
for every $w \in \tilde{\mathcal{W}} $ there holds 
$$\liminf_{k\rightarrow \infty} f(x_k(w))> f^*,$$ 
i.e., there exists $\varepsilon(w) >0$ small enough  such that  
$f(x_k(w))-f^* \geq 2\varepsilon(w) $ for all $k$. 
Since $f$ is continuous on the feasible set $\Omega$,   there exists $\tilde{y}(w)\in \Omega$ such that
$f(\tilde{y}(w))=f^*+\varepsilon(w).$
This further implies 
$$f(x_k(w))-f(\tilde{y}(w))=f(x_k(w))-f^*-\varepsilon(w) \geq 2\varepsilon(w)-\varepsilon(w)=\varepsilon(w).$$
Let us take an arbitrary $w \in \tilde{\mathcal{W}}.$ Denote $z_{k+1}(w):=x_k(w)+\alpha_k(w) p_k(w)$. 
Notice that nonexpansivity of orthogonal projection and the fact that $\tilde{y} \in \Omega$ together imply  
\begin{equation}
||x_{k+1}(w)-\tilde{y}(w)||=||P_{\Omega}(z_{k+1}(w))-P_{\Omega}(\tilde{y}(w))||\leq||z_{k+1}(w)-\tilde{y}(w)||.
\label{nej_proj}
\end{equation}
Furthermore, using the fact that $ \bar{g}_{k} $  is subgradient of the convex function $ f_{\mathcal{N}_{k}} $,  $ \bar{g}_{k}\in  \partial f_{\mathcal{N}_{k}}(x_k) $, we have $f_{\mathcal{N}_k}(x_k)-f_{\mathcal{N}_k}(\tilde{y})\leq \overline{g}^T_k(x_k-\tilde{y})$ and dropping the $w$ in order to facilitate the reading we obtain 
\begin{eqnarray}
||z_{k+1}- \tilde{y}||^2 
&=&\nonumber
||x_{k}+\alpha_k p_k-\tilde{y}||^2 = ||x_{k}-\alpha_k \zeta_k\overline{g}_k-\tilde{y}||^2 \\ \nonumber
&=& 
||x_{k}-\tilde{y}||^2-2\alpha_k \zeta_k\overline{g}^T_k \left(x_{k}-\tilde{y}\right) + \alpha_k^2 \zeta^2_k || \overline{g}_{k}||^2 \\ \nonumber
&\leq & 
||x_{k}-\tilde{y}||^2+2\alpha_k \zeta_k (f_{\mathcal{N}_{k}}(\tilde{y})-f_{\mathcal{N}_{k}}(x_k))+ \alpha_k^2 \zeta_k^2||\overline{g}_{k}||^2 \\ \nonumber
&\leq &
||x_{k}-\tilde{y}||^2 +2\alpha_k \zeta_k(f(\tilde{y})-f(x_k)+2e_k) +\alpha_k^2 \zeta_k^2||\overline{g}_{k}||^2\\ \nonumber
&\leq&
||x_{k}-\tilde{y}||^2-2\alpha_k\zeta_k(f(x_k)-f(\tilde{y}))+4e_k\alpha_k \overline{\zeta}+\alpha_k^2\overline{\zeta}^2G^2\\ \nonumber
&\leq& 
||x_{k}-\tilde{y}||^2-2 \alpha_k\underline{\zeta}\varepsilon+4e_k\alpha_k\overline{\zeta}+\alpha_k^2\overline{\zeta}^2G^2\\ 
&=&
||x_{k}-\tilde{y}||^2-\alpha_k \left(2 \underline{\zeta}\varepsilon-4e_k\overline{\zeta} -\alpha_k\overline{\zeta}^2G^2\right).
\label{dod1}
\end{eqnarray}

By ULLN we have  
$ \lim_{k \rightarrow \infty} e_k =0  \text{ a.s.}$, or more precisely, $ \lim_{k \rightarrow \infty} e_k(w) =0  $ for almost every $w \in \mathcal{W}$. Since $P(\tilde{\mathcal{W}})>0$, there must exist a sample path  $\tilde{w}\in \tilde{\mathcal{W}}$ such that $$\lim_{ k\rightarrow \infty} e_k(\tilde{w}) =0.$$ 
This further implies the existence of  $\tilde{k}(\tilde{w})\in \mathbb{N}$ such that for all $k\geq \tilde{k}(\tilde{w})$ we have 
\begin{equation}
\alpha_k (\tilde{w}) \overline{\zeta}^2G^2+4e_k (\tilde{w}) \overline{\zeta}\leq  \varepsilon(\tilde{w}) \underline{\zeta}
\label{nejednakost}
\end{equation}
because step S2 of SPS algorithm implies that $\lim_{k \to \infty } \alpha_k=0$ for any sample path.
Furthermore, since \eqref{dod1} holds for all $w \in \tilde{\mathcal{W}}$ and thus for $\tilde{w}$ as well, from \eqref{nej_proj}-\eqref{nejednakost} we obtain
%$$||x_{k+1}(\tilde{w})-\tilde{y}(\tilde{w})||^2\leq ||z_{k+1}(\tilde{w})-\tilde{y}(\tilde{w})||^2\leq||x_{k}(\tilde{w})-\tilde{y}(\tilde{w})||^2-\alpha_k(\tilde{w}) \varepsilon(\tilde{w}) \underline{\zeta}$$
 
\begin{eqnarray*}
||x_{k+1}(\tilde{w})- \tilde{y}(\tilde{w})||^2 
&\leq & 
||x_{k}(\tilde{w})-\tilde{y}(\tilde{w})||^2\\\nonumber
&-& \alpha_k(\tilde{w}) \left(2 \underline{\zeta}\varepsilon(\tilde{w})-4e_k(\tilde{w})\overline{\zeta} -\alpha_k(\tilde{w})\overline{\zeta}^2G^2\right) \\ \nonumber
& = & 
||x_{k}(\tilde{w})-\tilde{y}(\tilde{w})||^2\\\nonumber 
&+& \alpha_k(\tilde{w}) \left(4e_k(\tilde{w})\overline{\zeta} + \alpha_k(\tilde{w})\overline{\zeta}^2G^2-2 \underline{\zeta}\varepsilon(\tilde{w})\right) \\\nonumber
&\leq  & 
||x_{k}(\tilde{w})-\tilde{y}(\tilde{w})||^2+\alpha_k (\tilde{w})\left(\underline{\zeta}\varepsilon(\tilde{w})-2 \underline{\zeta}\varepsilon(\tilde{w})\right) \\ \nonumber
&=& 
||x_{k}(\tilde{w})-\tilde{y}(\tilde{w})||^2-\alpha_k (\tilde{w})\underline{\zeta}\varepsilon(\tilde{w}). \\ \nonumber
%\label{dod1}
\end{eqnarray*}
and
$$||x_{k+s}(\tilde{w})-\tilde{y}(\tilde{w})||^2\leq ||x_{k}(\tilde{w})-\tilde{y}(\tilde{w})||^2- \varepsilon (\tilde{w})\underline{\zeta} \sum_{j=0}^{s-1}\alpha_j(\tilde{w}).$$ 
Letting $s \to \infty$ yields a contradiction since $\sum_{k=0}^{\infty} \alpha_k \geq \sum_{k=0}^{\infty} C_1 /k =\infty$ for any sample path and  we conclude that \eqref{t1a}  holds.

Now, let us prove \eqref{t1b} under the additional assumption $\sum_{k=0}^{\infty}  e_k /k <\infty$. Notice that this assumption implies that  $\sum_{k=0}^{\infty} \alpha_k e_k <\infty$ since $\alpha_k \leq C_2 /k$.  Since \eqref{t1a} holds, we know that 
\begin{equation}
\liminf_{k\rightarrow\infty} f(x_k(w))=f^*,
\label{liminf}
\end{equation}
for almost every $w \in \mathcal{W}$. In other words, there exists $\overline{\mathcal{W}} \subseteq \mathcal{W}$ such that $P(\overline{\mathcal{W}})=1$ and \eqref{liminf} holds for all $w \in \overline{\mathcal{W}}$. Let us consider arbitrary $w \in \overline{\mathcal{W}}$. We will show that $\lim_{k\rightarrow\infty} x_k(w)=x^*(w) \in X^*$ which will imply the result \eqref{t1b}.  Once again let us  drop $w$ to facilitate the notation. 

Let $K_1\subseteq\mathbb{N}$ be a subsequence of iterations such that
$$\lim_{k\in K_1}f(x_k)=f^*.$$
Since $\lbrace{ x_k \rbrace}_{k\in K_1}\subseteq \lbrace{ x_k \rbrace}_{k\in\mathbb{N}}$ and $\lbrace{ x_k \rbrace}_{k\in\mathbb{N}}$ is bounded because of feasibility and the compactness of the feasible set $\Omega$,  there follows that $\lbrace{ x_k \rbrace}_{k\in K_1}$ is also bounded and  there exist $K_2\subseteq K_1$ and $\tilde{x}$ such that
$$\lim_{k\in K_2} x_k=\tilde{x}.$$
Then, we have
$$f^*=\lim_{k\in K_1}f(x_k)=\lim_{k\in K_2}f(x_k)=f(\lim_{k\in K_2} x_k)=f(\tilde{x}).$$
Therefore, $f(\tilde{x})=f^*$ and we have $\tilde{x}\in X^*$. Now, we show that the whole sequence of iterates converges. 
Let $\lbrace{ x_k \rbrace}_{k\in K_2}:=\lbrace{ x_{k_i} \rbrace}_{i\in\mathbb{N}}.$ Following the steps of  \eqref{dod1} and using the fact that $f(x_k)\geq f(\tilde{x})$ for all $k$, we obtain that the following holds for any $s\in\mathbb{N}$

\begin{eqnarray} 
||x_{k_i+s}- \tilde{x}||^2  
&\leq &
||x_{k_i}-\tilde{x}||^2+4\overline{\zeta}\sum_{j=0}^{s-1}e_{k_i+j}\alpha_{k_i+j}+\overline{\zeta}^2G^2\sum_{j=0}^{s-1}\alpha_{k_i+j}^2\\
&\leq &
||x_{k_i}-\tilde{x}||^2+4\overline{\zeta}\sum_{j=0}^{\infty}e_{k_i+j}\alpha_{k_i+j}+\overline{\zeta}^2G^2\sum_{j=0}^{\infty}\alpha_{k_i+j}^2\\
&=&
||x_{k_i}-\tilde{x}||^2+4\overline{\zeta}\sum_{j=k_i}^{\infty}e_j\alpha_j+\overline{\zeta}^2G^2\sum_{j=k_i}^{\infty}\alpha_j^2.
\end{eqnarray}

Due to the fact that  $\sum_{j=k_i}^{\infty}e_j\alpha_j$ and $\sum_{j=k_i}^{\infty}\alpha_j^2$ are the residuals of convergent sums,  for any  $s\in \mathbb{N} $ there holds
$$||x_{k_i+s}- \tilde{x}||^2  \leq a_i, \text{ where } \lim_{i\to\infty}a_i=0.$$
Thus, for all $s\in \mathbb{N} $ we have
$$\limsup_{i\to\infty}||x_{k_i+s}- \tilde{x}||^2\leq \limsup_{i\to\infty} a_i=\lim_{i\to\infty}a_i=0.$$
Since $s\in \mathbb{N} $ is arbitrary, there follows
$\limsup_{k\to\infty}||x_k-\tilde{x}||^2=0,$
i.e. $\lim_{k\to\infty}x_k=\tilde{x}$ which completes the proof.
%Addition, since $\lim_{k\rightarrow\infty}\alpha_k=0$ and $\overline{g}_k$ is bounded it follows 
%$\lim_{k\rightarrow\infty}\alpha_k\overline{g}_k=0,$ i.e. $\lim_{k\rightarrow\infty}\alpha_k\zeta_k\overline{g}_k=0.$
%Then, we have
%$$z_{k+1}=x_k-\alpha_k \zeta_k\overline{g}_k=x_{k-s}-\sum^s_{j=0}\alpha_{j}\zeta_{j}\overline{g}_{j}.$$
%$$x_{k+1}=P_{\Omega}(x_k-\alpha_k \zeta_k\overline{g}_k)=x_{k-s}-\sum^s_{j=0}\alpha_{j}\zeta_{j}\overline{g}_{j}.$$
%
%Therefore,
%$$||x_{k+1}-x_{k-s}||\leq \underbrace{\sum^s_{j=0} \alpha_{j}\zeta_{j}\overline{g}_{j}.}_{\rightarrow 0}$$
%
%We obtain that the sequence  $\lbrace x_k \rbrace$ is a Cauchy sequence. Since $\lbrace x_k \rbrace\subset \Omega\subseteq\mathbb{R}^n$, it follows that $\lbrace x_k \rbrace $ is convergent.
%
%Let $\tilde{x}=\lim_{k \rightarrow \infty} x_k.$ Since $f$ is continuous  we have $f(\tilde{x})=\lim_{k \rightarrow \infty}f(x_k).$ From \eqref{liminf} it follows
%
%%$\tilde{x}=\lim_{k \rightarrow \infty} x_k, f\in C\Rightarrow f(\tilde{x})=f(\lim_{k \rightarrow \infty} x_k)=\lim_{k \rightarrow \infty}f(x_k)$
%
%
%%$\Rightarrow \underbrace{\liminf_{k\rightarrow\infty}f(x_k)}_{=f^*
%% a.s.}=\lim_{k\rightarrow\infty}f(x_k)=\lim_{k\rightarrow\infty}f(\tilde{x})$\\
% %%%%%%%%%%%%%%%%%%%%
%%
%%$\Rightarrow \lim_{k\rightarrow\infty} f(x_k)=f(\tilde{x})$\\
%%
%%$ \Rightarrow \underbrace{\liminf_{k\rightarrow\infty} f(x_k)}_{f^*}=\lim_{k\rightarrow\infty} f(x_k)=f(\tilde{x})\\ 
%$$ \lim_{k\rightarrow\infty} x_k=x^* \text{ a.s. }.$$
\end{proof}

Let us comment on $e_k$ first. We obtain \eqref{t1a} provided that the sample size tends to infinity in an arbitrary manner. The stronger result is achieved under assumption of fast enough increase of the sample size. Having in mind the interval for step size $\alpha_k$, we conclude that the assumption of summability needed for \eqref{t1b} is satisfied if $e_k\leq C_3 k^{-\nu}$ holds for all $k$ large enough and arbitrary $C_3>0$, where $\nu>0$ can be arbitrary  small.  While in general this can be hard to guarantee, for some classes of functions $F$ (e.g. function plus noise with finite
variance), the error bound for cummulative samples derived in \cite{HT2} yields $e_k\leq C \sqrt{\frac{\ln \ln N_k}{N_k}},$ for all $k$ large enough and some positive constant $C$ directly dependent on the noise variance. In that case, it can be shown that the simple choice of $N_k=k$ provides the sufficient growth needed for \eqref{t1b}. 

An important class of the problems that we consider is a finite sum problem 
\begin{equation}\label{fs}
\min_{x \in \Omega} f_\mathcal{N}(x)=\frac{1}{N}\sum_{i=1}^{N} f_i(x),
\end{equation}
where the functions $f_i$ are continuous, convex and  possibly nonsmooth. In that case, we do not need a dominance assumption since \eqref{Shapiro-ULLN} is trivially satisfied if the full sample size is eventually achieved and retained. Thus, $e_k=0$ for all $k$ large enough and  $\sum_{k=0}^{\infty} e_k/k <\infty$ trivially holds. Moreover, the compactness of the feasible set and convexity of $f$ imply the Lipschitz continuouity of each $f_i$ on $\Omega$ and thus the subgradients $\bar{g}_k$ are uniformly bounded. Furthermore, we also know that the functions are uniformly bounded from below on $\Omega$. Although the sample paths may differ, the convergence result is deterministic since the original objective function $f_N$ is eventually used. We summarize the result in the following corollary of the previous theorem.   

\begin{corollary}
Suppose that the functions $f_i, i=1,...,N$ are continuous and convex and  $\lbrace x_k \rbrace$ is a sequence generated by Algorithm SPS applied to \eqref{fs}.  Suppose that $N_k=N$ for all $k$ large enough and  $\Omega$ is  compact and convex. Then $
\lim_{k\rightarrow\infty} x_k=x^* \in X^*$. 
\label{Th2}
\end{corollary}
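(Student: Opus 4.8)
The plan is to derive the corollary by verifying that the finite-sum setting \eqref{fs} is a special instance in which every hypothesis of Theorem \ref{Th1} holds, so that the stronger conclusion \eqref{t1b} applies directly. The key simplification is that once the full sample is retained the SAA function coincides with the objective, which makes the error term $e_k$ vanish identically for $k$ large and replaces the probabilistic ULLN argument by a deterministic one. Here I identify the true objective $f$ with $f_{\mathcal{N}}$, the finite sum in \eqref{fs}.

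First I would dispose of the relevant parts of Assumption A\ref{pp_Lip}. Since each $f_i$ is continuous and $\Omega$ is compact, $f_i$ attains its minimum on $\Omega$ and is therefore bounded below there; taking the least such bound over the finitely many indices yields a uniform lower constant $C$. The dominance condition is not needed, as it serves in Theorem \ref{Th1} only to invoke the ULLN \eqref{Shapiro-ULLN}, which holds trivially in the finite-sum case once the full sample is used. Thus the portion of Assumption A\ref{pp_Lip} that the proof of Theorem \ref{Th1} actually relies on is available.

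Next I would establish the uniform subgradient bound $\|\bar g_k\| \leq G$. Convexity together with compactness of $\Omega$ gives local Lipschitz continuity of each $f_i$ on a neighborhood of $\Omega$, hence a bound $L_i$ on the norms of its subgradients over $\Omega$. Because there are only finitely many indices $i=1,\dots,N$, the constant $G:=\max_{1\le i\le N} L_i$ is finite, and since any $\bar g_k\in\partial f_{\mathcal{N}_k}(x_k)$ is an average of subgradients of the $f_i$, the triangle inequality gives $\|\bar g_k\|\le G$. This is the step where finiteness of the sum is essential: in the general expectation setting the uniform bound had to be imposed as an assumption, whereas here it is automatic.

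Finally I would handle the error term and conclude. By hypothesis $N_k=N$ for all $k$ large enough, so for such $k$ one has $f_{\mathcal{N}_k}=f_{\mathcal{N}}=f$ on $\Omega$ and hence $e_k=0$; consequently $e_k\to 0$ and the summability requirement $\sum_{k=0}^{\infty} e_k/k<\infty$ of Theorem \ref{Th1} hold trivially. All hypotheses of Theorem \ref{Th1} are then met, so \eqref{t1b} yields $\lim_{k\to\infty} x_k = x^*\in X^*$. The main obstacle is purely one of bookkeeping: reconciling the theorem's assumption $N_k\to\infty$ with the stabilization $N_k=N$. I would resolve it by observing that the proof of Theorem \ref{Th1} invokes $N_k\to\infty$ only through its consequence $e_k\to 0$ (for \eqref{t1a}) and through the summability of $e_k/k$ (for \eqref{t1b}), both of which hold here, indeed trivially, since $e_k=0$ eventually; the argument therefore transfers verbatim. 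Because the objective $f_{\mathcal{N}}$ is eventually used along every sample path, the almost sure conclusion sharpens to a deterministic one.
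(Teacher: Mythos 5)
Your proposal is correct and follows essentially the same route as the paper: the corollary is obtained by checking that the finite-sum setting supplies all the ingredients actually used in the proof of Theorem \ref{Th1} --- $e_k=0$ once the full sample is retained (so the ULLN and the summability condition are trivial), uniform boundedness of the subgradients from Lipschitz continuity of the finitely many convex $f_i$ on the compact set $\Omega$, and uniform lower boundedness --- so that the conclusion \eqref{t1b} holds deterministically. Your explicit remark that the hypothesis $N_k\to\infty$ enters the theorem's proof only through $e_k\to 0$ and $\sum_k e_k/k<\infty$ is exactly the bookkeeping the paper leaves implicit.
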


Since the compactness of $\Omega$ excludes the important class of constrained problems such as $x\geq 0$ which appear as subproblems in many cases, for instance in penalty methods, it is important to comment on the alternatives. The assumption of bounded $\Omega$ may be replaced with the assumption of bounded iterate sequence $\{x_k\}_k$ which is common in stochastic analysis. We state the result for completeness. 

\begin{theorem}
Suppose that Assumption A\ref{pp_Lip} holds and $\lbrace x_k \rbrace \subseteq \bar{\Omega}$ is a sequence generated by Algorithm SPS where $N_k \rightarrow \infty$ and $\bar{\Omega}\subseteq\Omega$ is bounded.  Assume  that $\Omega$ is  closed and convex and there exists $G$ such that $|| \bar{g}_k ||\leq G$ for all $k$. Then
$
\liminf_{k \to \infty} f(x_k)=f^*  \text{a.s.}.
$
 Moreover,  
 $\lim_{k\rightarrow\infty} x_k=x^* \in X^* $  a.s. 
 provided that $\sum_{k=0}^{\infty}  e_k/k <\infty$, where $e_k:=\max_{x\in \bar{\Omega}}|f_{\mathcal{N}_k}(x)- f(x)|.$

\label{Th12}
\end{theorem}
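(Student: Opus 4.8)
The plan is to observe that Theorem~\ref{Th12} is a near-verbatim restatement of Theorem~\ref{Th1} with the compactness of $\Omega$ replaced by the weaker hypothesis that the iterates remain in a bounded set $\bar\Omega\subseteq\Omega$. The entire proof of Theorem~\ref{Th1} goes through with essentially cosmetic changes, because the compactness of $\Omega$ was used in only two places, and in both places what was actually needed was boundedness of the sequence $\{x_k\}$ together with a uniform error bound over the region the iterates inhabit. Thus the strategy is to rerun the earlier argument, tracking exactly where $\Omega$'s compactness entered and verifying that $\bar\Omega$ bounded suffices.

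First I would establish the $\liminf$ statement \eqref{t1a}. The proof-by-contradiction in Theorem~\ref{Th1} produces a point $\tilde y(w)\in\Omega$ with $f(\tilde y)=f^*+\varepsilon$, and the crucial inequality chain \eqref{dod1} requires only that the orthogonal projection onto $\Omega$ is nonexpansive (which holds since $\Omega$ is closed and convex), that $\bar g_k$ is a subgradient of the convex SAA function, and that $\|\bar g_k\|\le G$. None of these need compactness of $\Omega$. The one genuine use of compactness was to invoke the ULLN \eqref{Shapiro-ULLN} so that $e_k\to 0$; here one simply applies the ULLN on the compact set $\overline{\bar\Omega}$ (the closure of the bounded set $\bar\Omega$) and redefines $e_k:=\max_{x\in\bar\Omega}|f_{\mathcal N_k}(x)-f(x)|$, exactly as the theorem statement already does. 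Since the iterates and $\tilde y$ all lie in $\bar\Omega$, this bound controls the approximation error $f_{\mathcal N_k}(\tilde y)-f(\tilde y)$ and $f(x_k)-f_{\mathcal N_k}(x_k)$ appearing in \eqref{dod1}. The telescoping contradiction with $\sum\alpha_k=\infty$ then closes the argument verbatim.

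For the stronger conclusion \eqref{t1b}, the second half of the earlier proof used boundedness of $\{x_k\}$ to extract a convergent subsequence $x_k\to\tilde x$ along $K_2$, and then used the summable-residual argument on $\sum e_j\alpha_j$ and $\sum\alpha_j^2$ to show the whole sequence converges to that limit point $\tilde x\in X^*$. Boundedness of $\{x_k\}$ is now a direct hypothesis (the iterates lie in the bounded set $\bar\Omega$), so the Bolzano--Weierstrass extraction still works, and $\tilde x\in\bar\Omega\subseteq\Omega$ so $f(\tilde x)$ is defined and the identification $f(\tilde x)=f^*$ is unchanged. The summability $\sum e_k/k<\infty$, with the redefined $e_k$, again yields $\sum\alpha_k e_k<\infty$, and the inequality cascade reproducing \eqref{dod1} gives $\|x_{k_i+s}-\tilde x\|^2\le a_i$ with $a_i\to 0$ uniformly in $s$, forcing $\limsup_k\|x_k-\tilde x\|=0$.

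I do not expect a serious obstacle, since this is a relaxation argument rather than a new theorem: the only point requiring a moment's care is confirming that the ULLN may legitimately be applied on $\bar\Omega$ (or its closure) rather than on all of $\Omega$, which is fine because \eqref{Shapiro-ULLN} holds on \emph{any} compact subset $S\subseteq\mathbb R^n$ and the iterates never leave $\bar\Omega$. Consequently I would write the proof simply as: ``The proof follows the lines of Theorem~\ref{Th1}, replacing the compact set $\Omega$ by the bounded set $\bar\Omega$ throughout; compactness of $\Omega$ was used only to guarantee boundedness of $\{x_k\}$ and to apply the ULLN, both of which remain valid under the present hypotheses.''
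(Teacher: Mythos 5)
Your proposal is correct and matches the paper's (implicit) approach exactly: the paper offers no separate proof of this theorem, stating it ``for completeness'' as the direct adaptation of Theorem~\ref{Th1} in which compactness of $\Omega$ is replaced by boundedness of the iterate set $\bar{\Omega}$, with the ULLN and the error $e_k$ now taken over $\bar{\Omega}$. One small imprecision to flag: the comparison point $\tilde{y}$ in the $\liminf$ argument is produced by continuity of $f$ on $\Omega$ and need not lie in $\bar{\Omega}$, so the redefined $e_k$ does not automatically bound $|f_{\mathcal{N}_k}(\tilde{y})-f(\tilde{y})|$; this is harmless because that part of the argument only needs the SAA error at the single point $\tilde{y}$ to tend to zero, which the ULLN supplies on the compact set $\operatorname{cl}(\bar{\Omega})\cup\{\tilde{y}\}$, while the reference point $\tilde{x}$ used in the convergence part is a limit of iterates and hence does lie in $\operatorname{cl}(\bar{\Omega})$, where your redefined $e_k$ applies.
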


Now, let us see under which conditions we obtain the boundedness of iterates. Define an SAA error sequence as 
\begin{equation}
\label{ebar}
\bar{e}_k=|f_{\mathcal{N}_k}(x_k)-f(x_k)|+|f_{\mathcal{N}_k}(x^*)-f(x^*)|,
\end{equation} 
where $x^*\in X^*$ is an arbitrary solution point. We have the following result. 

\begin{proposition}
Suppose that Assumption A\ref{pp_Lip} holds and $\lbrace x_k \rbrace$ is a sequence generated by Algorithm SPS where $N_k\to\infty$.  Assume  that $\Omega$ is  closed and convex and there exists $G$ such that $|| \bar{g}_k ||\leq G$ for all $k$. Then there exists a compact set $\bar{\Omega}\subseteq\Omega$ such that $\lbrace x_k \rbrace \subseteq \bar{\Omega}$ provided that 
$\sum_{k=0}^{\infty} \bar{e}_k/k \leq C_4 <\infty.$ 
\label{Th123}
\end{proposition}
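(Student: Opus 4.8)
The plan is to reuse the distance-decrease estimate \eqref{dod1} from the proof of Theorem \ref{Th1}, but anchored at a fixed minimizer $x^\ast \in X^\ast$ instead of the auxiliary point $\tilde y$, and then to read the resulting inequality as a quasi-monotone recursion whose increments are summable. This is the whole point: dropping compactness means the boundedness of $\{x_k\}$ is no longer free, so we must \emph{produce} a bounding set rather than assume it. First I would fix any $x^\ast \in X^\ast$ (nonempty by the definition of $\bar e_k$ in \eqref{ebar}) and set $a_k := \|x_k - x^\ast\|^2$. Since $x^\ast \in \Omega$, the nonexpansivity of the projection, exactly as in \eqref{nej_proj}, gives $a_{k+1} \le \|z_{k+1} - x^\ast\|^2$ with $z_{k+1} = x_k - \alpha_k \zeta_k \bar g_k$.

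Next I would expand $\|z_{k+1} - x^\ast\|^2$ and invoke the subgradient inequality $\bar g_k^T(x_k - x^\ast) \ge f_{\mathcal{N}_k}(x_k) - f_{\mathcal{N}_k}(x^\ast)$ together with $\|\bar g_k\| \le G$ and $\underline\zeta \le \zeta_k \le \overline\zeta$, following the chain \eqref{dod1} verbatim. The one new step is the passage to the true objective: writing
\[
f_{\mathcal{N}_k}(x_k) - f_{\mathcal{N}_k}(x^\ast) \ge \big(f(x_k) - f^\ast\big) - \bar e_k
\]
and using $f(x_k) \ge f^\ast = f(x^\ast)$ for the feasible iterate $x_k$, the term $-2\alpha_k\zeta_k\big(f(x_k) - f^\ast\big)$ is nonpositive and is discarded. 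This is exactly where anchoring at a genuine solution (and not merely at some feasible point) pays off: it is what gives that term the helpful sign. It is also why only the pointwise error $\bar e_k$ of \eqref{ebar}, evaluated at the two points $x_k$ and $x^\ast$, is needed — without compactness the uniform error $e_k$ of Theorem \ref{Th1} is unavailable. Bounding the remaining terms by $\overline\zeta$ and $G$ yields
\[
a_{k+1} \le a_k + 2\overline\zeta\,\alpha_k \bar e_k + \overline\zeta^2 G^2 \alpha_k^2.
\]

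I would then insert $\alpha_k \le C_2/k$ from Step S2, which makes the increments nonnegative and summable: $\sum_k \bar e_k/k \le C_4 < \infty$ by hypothesis, and $\sum_k 1/k^2 < \infty$ always. Telescoping from the initial index gives, for every $K$,
\[
a_K \le a_0 + 2 C_2 \overline\zeta \sum_{k} \frac{\bar e_k}{k} + C_2^2 \overline\zeta^2 G^2 \sum_{k} \frac{1}{k^2} =: M < \infty,
\]
so every iterate lies in the closed ball $\overline B(x^\ast, \sqrt M)$.

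Finally I would take $\bar\Omega := \Omega \cap \overline B(x^\ast,\sqrt M)$, which is the intersection of a closed set with a closed bounded ball, hence compact, and which contains all $x_k$ by construction — precisely the claim. No step is computationally hard; the only genuine obstacle is the conceptual one of manufacturing boundedness from a summable perturbation of the monotone distance recursion, and the resolution is the choice of anchor point $x^\ast \in X^\ast$ that renders the optimality-gap term negative.
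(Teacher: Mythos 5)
Your proposal is correct and follows essentially the same route as the paper's own proof: anchoring the distance recursion at a fixed $x^\ast\in X^\ast$, discarding the nonpositive optimality-gap term, and telescoping the summable perturbations $2\overline{\zeta}\,C_2\,\bar e_k/k$ and $\overline{\zeta}^2G^2C_2^2/k^2$ to obtain a uniform bound on $\|x_k-x^\ast\|$. The only (harmless) addition is your explicit construction of $\bar\Omega$ as $\Omega$ intersected with a closed ball, which the paper leaves implicit.
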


\begin{proof} Let $x^*$ be an arbitrary solution of the problem \eqref{prob1}. Then, by following similar steps as in \eqref{dod1} and using the nonexpansivity of the projection  \eqref{nej_proj} we obtain that the following holds for an arbitrary $k$
\begin{eqnarray}
\label{dod2}
||x_{k+1}-x^*||^2 
&\leq &\nonumber
||z_{k+1}-x^*||^2 = ||x_{k}-\alpha_k \zeta_k\overline{g}_k-x^*||^2 \\ \nonumber
&=& 
||x_{k}-x^*||^2-2\alpha_k \zeta_k\overline{g}^T_k \left(x_{k}-x^*\right) + \alpha_k^2 \zeta^2_k || \overline{g}_{k}||^2 \\ \nonumber
&\leq & 
||x_{k}-x^*||^2+2\alpha_k \zeta_k (f_{\mathcal{N}_{k}}(x^*)-f_{\mathcal{N}_{k}}(x_k))+ \alpha_k^2 \zeta_k^2||\overline{g}_{k}||^2 \\ \nonumber
&\leq &
||x_{k}-x^*||^2 +2\alpha_k \zeta_k(f(x^*)-f(x_k)+\bar{e}_k) +\alpha_k^2 \zeta_k^2||\overline{g}_{k}||^2\\ \nonumber
&\leq&
||x_{k}-x^*||^2+2\bar{e}_k \overline{\zeta}C_2/k+\alpha_k^2\overline{\zeta}^2 G^2\\ \nonumber
&\leq& 
||x_{0}-x^*||^2+2 C_2 \overline{\zeta} \sum_{k=0}^{\infty} \frac{\bar{e}_k}{k}  + \overline{\zeta}^2 G^2 \sum_{k=0}^{\infty} \frac{C^2_2}{k^2}.\nonumber
\end{eqnarray}
Thus, there exists a constant $ C_5$ such that $||x_{k}-x^*|| \leq C_5$ for all $k$, which completes the proof. 
\end{proof}
We summarize the convergence result for unbounded feasible set in the following theorem. 

\begin{theorem}
Suppose that Assumption A\ref{pp_Lip} holds, the feasible set  $\Omega $ is convex and  closed and $\lbrace x_k \rbrace$ is a sequence generated by Algorithm SPS where $N_k$ tends to infinity fast enough to provide  $\sum_{k=0}^{\infty} \bar{e}_k/k \leq C_4 <\infty$ with $\bar{e}_k$ given by \eqref{ebar}. Then
$$
\liminf_{k \to \infty} f(x_k)=f^*  \text{a.s.}.
$$
 Moreover,  
 $$\lim_{k\rightarrow\infty} x_k=x^*   \text{a.s.}
$$
 for some $x^* \in X^*$  provided that $\sum_{k=0}^{\infty}  e_k/k <\infty$, where $e_k:=\max_{x\in \bar{\Omega}}|f_{\mathcal{N}_k}(x)- f(x)|$ and $\bar{\Omega}$ is a compact set containing $\lbrace x_k \rbrace$. 

\label{Th1234}
\end{theorem}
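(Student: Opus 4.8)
The plan is to obtain this statement essentially for free by concatenating the two results already proved for a closed, possibly unbounded feasible set: Proposition~\ref{Th123} converts the growth rate of the sample size into boundedness of the iterate sequence, and Theorem~\ref{Th12} then converts boundedness into the two convergence conclusions. So the proof should not repeat any of the descent-type estimates; it should only verify that the hypotheses of the two invoked results line up and that the compact set produced by the boundedness argument is exactly the one over which the error $e_k$ is subsequently measured.

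First I would invoke Proposition~\ref{Th123}. Its hypotheses are precisely those assumed here, namely Assumption A\ref{pp_Lip}, closedness and convexity of $\Omega$, the uniform subgradient bound $\|\bar{g}_k\|\le G$, and $N_k\to\infty$, together with the summability $\sum_{k=0}^{\infty}\bar{e}_k/k\le C_4<\infty$ that is built into the statement through the phrase ``fast enough''. A point worth stressing is that $\bar{e}_k$, defined in \eqref{ebar} through $x_k$ and a fixed solution $x^*$, makes no reference to any bounding set, so it is legitimate to use it before any such set is available; there is no circularity. The conclusion is a compact $\bar{\Omega}\subseteq\Omega$ with $\{x_k\}\subseteq\bar{\Omega}$, and inspecting that proof one may take $\bar{\Omega}=\{x\in\Omega:\|x-x^*\|\le C_5\}$ with $C_5$ fixed by $\|x_0-x^*\|$, $C_4$, and $\sum_k C_2^2/k^2$.

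With the iterates now confined to the compact set $\bar{\Omega}$, I would apply Theorem~\ref{Th12} with this very $\bar{\Omega}$. All of its hypotheses are in force: $\{x_k\}\subseteq\bar{\Omega}$ bounded, $\Omega$ closed and convex, $N_k\to\infty$, and $\|\bar{g}_k\|\le G$. Since $\bar{\Omega}$ is compact, the ULLN \eqref{Shapiro-ULLN} applies on $\bar{\Omega}$, so $e_k:=\max_{x\in\bar{\Omega}}|f_{\mathcal{N}_k}(x)-f(x)|\to 0$ almost surely, which is exactly what the $\liminf$ part of Theorem~\ref{Th12} needs, yielding $\liminf_{k\to\infty}f(x_k)=f^*$ a.s. For the stronger conclusion $\lim_k x_k=x^*\in X^*$ a.s.\ I would then feed in the extra hypothesis $\sum_{k=0}^{\infty}e_k/k<\infty$ stated in the theorem, with $e_k$ taken over the same $\bar{\Omega}$, and read off the convergence directly from the second part of Theorem~\ref{Th12}.

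The delicate step is not analytic but the bookkeeping that links the two invoked results. The hard part will be to work on a single full-measure event on which simultaneously the summability $\sum_k\bar{e}_k/k\le C_4$ holds and the ULLN limit $e_k\to0$ is valid, and to check that the compact set handed over by Proposition~\ref{Th123} is identical to the set over which $e_k$ is defined, since otherwise the two summability conditions would refer to different quantities. It is also worth recording the consistency check $\bar{e}_k\le 2e_k$ for iterates in $\bar{\Omega}$, which confirms the two hypotheses are compatible in scale, and remarking that, because $C_4$ may be taken deterministic, $\bar{\Omega}$ does not depend on the sample path, so the almost sure statements of Theorem~\ref{Th12} transfer without further measure-theoretic care.
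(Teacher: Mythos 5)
Your proposal is correct and is exactly the argument the paper intends: Theorem~\ref{Th1234} is stated without a separate proof precisely because it is the concatenation of Proposition~\ref{Th123} (summability of $\bar{e}_k/k$ gives a compact $\bar{\Omega}$ containing the iterates) with Theorem~\ref{Th12} applied on that $\bar{\Omega}$. Your bookkeeping remarks (that $\bar{e}_k$ is defined without reference to $\bar{\Omega}$, so there is no circularity, and that the uniform bound $\|\bar{g}_k\|\le G$ must be carried along as a standing assumption) are accurate and, if anything, make the implicit dependence on that bound more explicit than the theorem statement itself does.
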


\subsection{Improving the efficiency - Line Search SPS} 

Notice that SPS algorithm works with an arbitrary subgradient direction related to the current SAA function. However, in some applications such as Hinge Loss binary clasification, it is possible to provide a descent direction with respect to the SAA function \cite{nasprvi} by applying the procedure proposed in \cite{kineski} or gradient subsampling technique \cite{burke} for instance. On the other hand, it is well known that applying the  line search may improve the performance of the algorithm significantly, even in the stochastic environment. In order to make the SPS algorithm more efficient, we propose a line search technique adapted to the nonsmooth variable sample size framework to fit the SPS convergence analysis. The proposed line search does not require a descent search direction in order to be well defined, nor the convergence analysis depends on the descent property. So, the following property \eqref{descent} of $\bar{g_k}  \in \partial f_{\mathcal{N}_k} (x_k)$ is desirable, but not necessary in order to prove the convergence of the Line Search SPS (LS-SPS) algorithm presented in the sequel,
\begin{equation}
\label{descent}
 \sup_{g \in \partial f_{\mathcal{N}_k} (x_k)} g^T p_{k} \leq -\frac{m}{2}\|p_k\|^2 \quad \mbox{for some} \; m>0.
\end{equation}
{\bf{The LS procedure.}} Since we employ the spectral subgradient method, we use nonmonotone Armijo-type line search condition 
\begin{equation}
f_{\mathcal{N}_{k}}(x_k+\alpha_k  p_{k})\leq \max_{i \in [\max \{1,k-c\}, k]}{f_{\mathcal{N}_{i}}(x_i)}-\eta \alpha_k ||p_{k}||^2,
\label{Armijo}
\end{equation}
 where $p_{k}$ is the search direction as in  Step S1 of Algorithm 1.
The candidates for  $ \alpha_k$ that we consider are: $d_k$ and $ (d_k+1/k)/2$, where $d_k= \min \{1, C_2/k\}$. The reasoning behind this is the following. The choice of $\alpha_k=1/k$ is a typical choice that is suitable for obtaining a.s. convergence. The line search is employed to estimate if the  larger value of $\alpha_k$ may be used. Since the backtracking techniques usually start with 1, we take the minimum of 1 and $C_2/k$ as the initial choice. Although $C_2/k$ must be included to ensure the theoretical requirements of Step S2, one can take $C_2$ arbitrary large such that $d_k=1$ even for the large values of $k$. Thus, practically, 1 would be the initial choice in all practical applications. We set the middle of the interval $[1/k, d_k]$ as the second possible choice for step size in line search. Although other strategies are feasible as well, we reduce to these two guesses to avoid the computational costs of unsuccessful line search attempts. Thus, if $\alpha_k=d_k$ satisfies \eqref{Armijo}, we take this as a step size. If not, we check  \eqref{Armijo} with the medium value $\alpha_k=(d_k+1/k)/2$. If the condition is satisfied, we retain this choice, otherwise we set $\alpha_k=1/k$.

\noindent {\bf Algorithm 1: LS-SPS} \\({\bf L}ine {\bf S}earch {\bf S}pectral {\bf P}rojected {\bf S}ubgradient Method for Nonsmooth Optimization)
\label{SPGNSLS}
\begin{itemize}
\item[S0] \textit{Initialization.} Given $N_0 \in \mathbb{N},$ $x_0\in\Omega,$ $0<C_1<1 < C_2< \infty, $ $0 < \underline{\zeta}\leq  \overline{\zeta}< \infty,$ $\zeta_0 \in \left[\underline{\zeta},\overline{\zeta}\right], \eta \in (0,1), c \in \mathbb{N}.$ Set k = 0.

\item[S1] \textit{Direction.} Choose  $\bar{g}_{k}\in  \partial f_{\mathcal{N}_{k}}(x_k)$ satisfying \eqref{descent} if possible and set $p_{k}=-\zeta_k \bar{g}_{k}$. 

\item[S2] \textit{Step size.} Choose $\alpha_k \in \{d_k, (d_k+1/k)/2\}$ such that \eqref{Armijo} holds if possible. Otherwise set $\alpha_k=\frac{1}{k}.$

\item[S3] \textit{Main update.} Set $x_{k+1} = P_{\Omega}(x_k+\alpha_k  p_{k})$ and  $s_k = x_{k+1}-x_k$.

\item[S4] \textit{Sample size update.} Chose $ N_{k+1} \in \mathbb{N} $.

\item[S5] \textit{Spectral coefficient update.} Calculate $y_k = g_{\mathcal{N}_{k}}(x_{k+1})- \bar{g}_k$ where $g_{\mathcal{N}_{k}}(x_{k+1}) \in \partial f_{\mathcal{N}_{k}}(x_{k+1})$.
Set 
$\zeta_{k+1} = \min \lbrace\overline{\zeta},\max\lbrace\underline{\zeta}, \frac{s_k^T s_k}{s_k^T y_k}\rbrace\rbrace.$  
 
\item[S6] Set $k:=k+1$ and go to S1. 
\end{itemize}

{\bf{Remark.}} LS-SPS algorithm falls into the framework of SPS algorithm as $\alpha_k$ satisfies the condition \eqref{Armijo}. Thus, the whole convergence analysis presented for the SPS algorithm also holds for LS-SPS.

\section{Numerical results}
We performed preliminary numerical experiments on the set of binary classification problems listed in Table \ref{tabela_dataset}.  The problems are modeled by the $L_2$-regularized Hinge Loss. More precisely, we consider the following optimization problem for learning with a Support Vector Machine introduced in  \cite{pegasos} 
$$
\min_{x \in \Omega} f (x) : = 10||x||^2 + \frac{1}{N}\sum_{i=1}^{N} \max\lbrace{0, 1 - z_i x^T w_i\rbrace},
$$
$$\Omega : = \lbrace{x \in \mathbb{R}^n \;: ||x||^2 \leq 0.1 \rbrace},$$
where $w_i \in \mathbb{R}^n $ are the input features and $z_i \in \lbrace{1,-1\rbrace}  $ are the corresponding labels. Thus, we have a convex problem with the compact feasible set easy to project on. Moreover, for these kind of problems it is possible to calculate the descent direction and we use the procedure proposed in \cite[Algorithm 2, p.~1155]{kineski} as a subroutine that provides the descent property \eqref{descent}. We employ this  subroutine in all the tested algorithms to ensure the fair comparison. 

\begin{table}[h!]
\begin{center}
 \begin{tabular}{||c l c c c c ||} 
 \hline
  & Data set & $N$ & $n$ & $N_{train}$ & $N_{test}$ \\ [0.5ex] 
 \hline\hline
 1 & SPLICE \cite{SPLiADL}& 3175& 60 &2540 &635\\ 
 \hline
 2 & MUSHROOMS \cite{MUSH}& 8124& 112 &6500 &1624\\ 
 \hline
 3 & ADULT9 \cite{SPLiADL}& 32561& 123 &  26049&6512\\
 \hline
 4 & MNIST(binary) \cite{MNIST} &70000 & 784 & 60000&10000\\ [0.5ex] 
 \hline
\end{tabular}
\caption{Properties of the datasets used in the experiments.}\label{tabela_dataset}
\end{center}
\end{table}

Our numerical study has several  goals. It is designed to investigate: \\
a) whether the variable sample size approach remains  beneficial in the nonsmooth environment with bounded full sample; \\
b) whether introducing the  line search pays off;  \\
c) whether the spectral coefficient improves the efficiency of the projected subgradient method. 

We set the experiments as follows. The main criterion for comparison of the methods will be the computational cost modeled  by $FEV$ (number of function evaluations). More precisely, $FEV^m_k$ represents the number of scalar products needed for  method $m$ to calculate $x_k$ (starting from $x_0$). We also track the value of the true objective function across the iterations to observe the progress of the considered method. 

To answer the question a), we compare the VSS methods to their full sample counterparts. The extension $-F$ (e.g. SPS-F) will indicate that the full sample is used at every iteration, i.e., $N_k=N$ for all $k$. On the other hand, we assume the following sample size increase for the VSS methods: $N_{k+1} =\lceil\min\lbrace 1.1N_k,N \rbrace\rceil$ with $N_0=0.1 N$. Obviously there are many other choices which can be more efficient, but we choose this simple increase to be tested in the initial phase of the method evaluation. 
To address the question  b) we compare LS-SPS algorithm  to SPS algorithm with the standard choice of the  step size  $\alpha_k=1/k$.  Finally, to address c), we compare the proposed methods to the first order subgradient method denoted by LS-PS (Line Search Projected Subgradient), which can be viewed as a special case of LS-SPS with $ \underline{\zeta}= \overline{\zeta}=1.$ We also test the VSS and the full sample alternatives of the projected subgradient method: LS-PS and LS-PS-F, respectively. The results of the subgradient method with the choice of $\alpha_k=1/k$ were poor and thus not reported here.

The relevant parameters are as follows.   
The initial points are chosen randomly from $(0,1)$ interval and we use the same initial point for all the tested methods within one run. The step size parameters are $C_1=10^{-2}, C_2=10^2$  and the line search is performed with $\eta =10^{-4}$ and  $c=5$. For the proposed  spectral methods, the safeguard parameters are   $\underline{\zeta}=10^{-4}$ and $\overline{\zeta}=10^4$.   
 
We perform 5 independent runs  for each of the data sets which yields 20 runs of each method in total. A demonstrative run is presented in Figure \ref{MNIST_fig} where the objective function $f(x_k)$ is plotted against the $FEV_k$. It reveals that LS-SPS methods outperform other methods in a sense that reach tighter  vicinity of the solution. Even if we use the spectral coefficient, predetermined step size was not enough  to bring the sequence to the same vicinity as obtained by the LS counterparts. 
On the other hand, observing the LS-PS method which uses line search but without a spectral coefficient, we can see that the line search itself (without second order information) was not enough to push the subgradient method towards the solution. Finally, notice that the computational cost is reduced significantly by employing the VSS scheme in LS-SPS method. 

\begin{figure}[!ht]
     \includegraphics[scale=0.5,angle = 270]{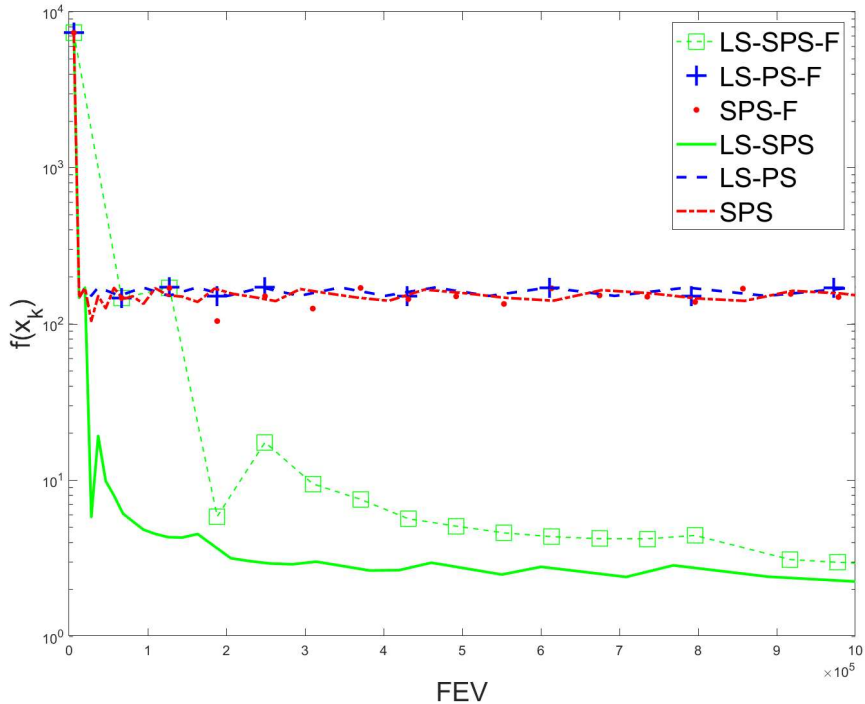}
       \caption{MNIST data set}\label{MNIST_fig}
\end{figure}

In order to compare the tested methods taking into account all the runs and data sets, we employ the metric based on the ideas of the performance profile \cite{pp}  adapted to the stochastic environment in \cite{andrea}. Roughly speaking, we estimate the probability of winning for each of the tested methods. For the considered run, the method $m$ wins if it reaches the vicinity $\tau$ of the solution with the smallest costs. Since the theoretical stopping criterion is non-existing, we stop the methods if the maximal number of scalar products is achieved. At each iteration $k$  we measure the distance from the solution by observing the relative error of method $m$ with respect to the optimal value, i.e., $r^m_k:=(f(x^m_k)-f^*)/f^*$. For each method $m$ and each run $l$ we register the first iteration $k(m,l)$ at which we have $r^m_{k(m,l)} \leq \tau$ and read the corresponding $FEV^m_{k(m,l)}$. Then, the method $m$ earns a point in run $l$ if $FEV^m_{k(m,l)}=\min_{j}  FEV^j_{k(j,l)}$. Finally, we estimate the probability of winning, denoted by $\pi $ by 
$$\pi=\frac{t}{T},$$
where $t$ is the number of earned points and $T$ is the total number of runs. 
Notice that in the described situation we can have more winners, in other words more methods can share the first place if they reached the goal with the same costs.

The results are presented in Figure \ref{Pobednik20_fig} for different  relative errors  $\tau \in [0.01, 3.5]$. They reveal that the VSS methods clearly outperform their full sample counterparts and that LS-SPS method turns out to be the best possible choice according to the  conducted experiments. The algorithms LS-PS and SPS reach 1 for very large values of the relative error $\tau$ which is not relevant, so we do not show this part of the graph. 

The Figure \ref{tau1_fig} represents classical performance profile (PP) graph for fixed  relative error  $\tau=1$. The FEV is kept as the criterion for the classical  PP as well. On the $y$-axes we plot the probability that the method is close enough to the best one, where "close enough" is determined by the value on $x$-axes denoted by $q$.  More precisely, retaining the same notation as above, the method $m$ earns a PP(q) point in run $l$ if $FEV^m_{k(m,l)}\leq q \min_{j}  FEV^j_{k(j,l)}$ and the plotted values correspond to  $$\pi_{PP}(q)=\frac{t_{PP}(q)}{T},$$
where $t_{PP}(q)$ is the number of earned PP(q) points for the considered method. Again, from this figure it is clear that LS-SPS is 
the most robust, i.e., it has the highest probability of being the optimal solver.

\begin{figure}[!ht]
        \includegraphics[scale=0.5,angle = 270]{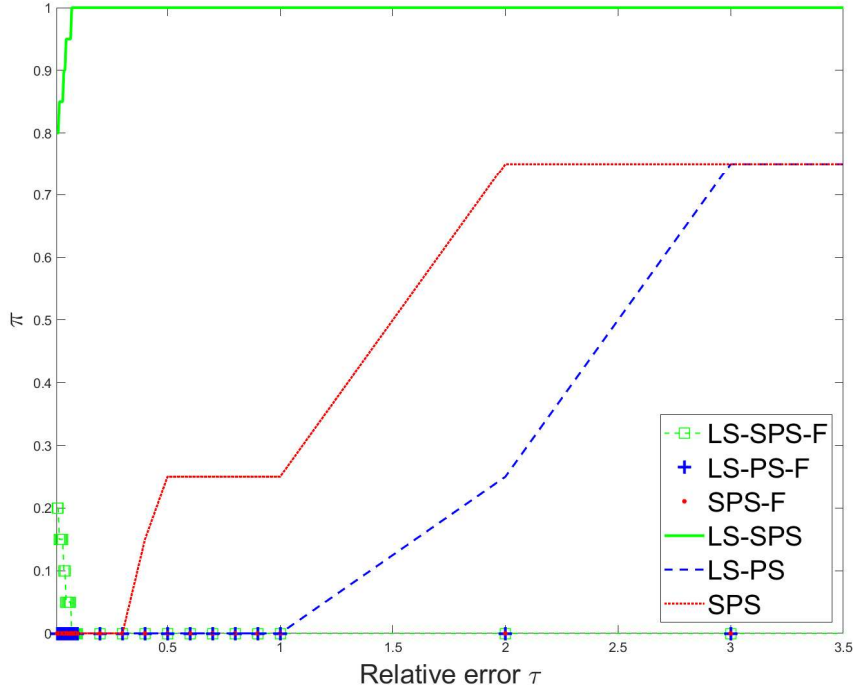}
              \caption{Empirical probabilities of winning ($\pi$) for different relative errors  ($\tau$).}\label{Pobednik20_fig}
\end{figure}

\begin{figure}[!ht]
        \includegraphics[scale=0.5,angle = 270]{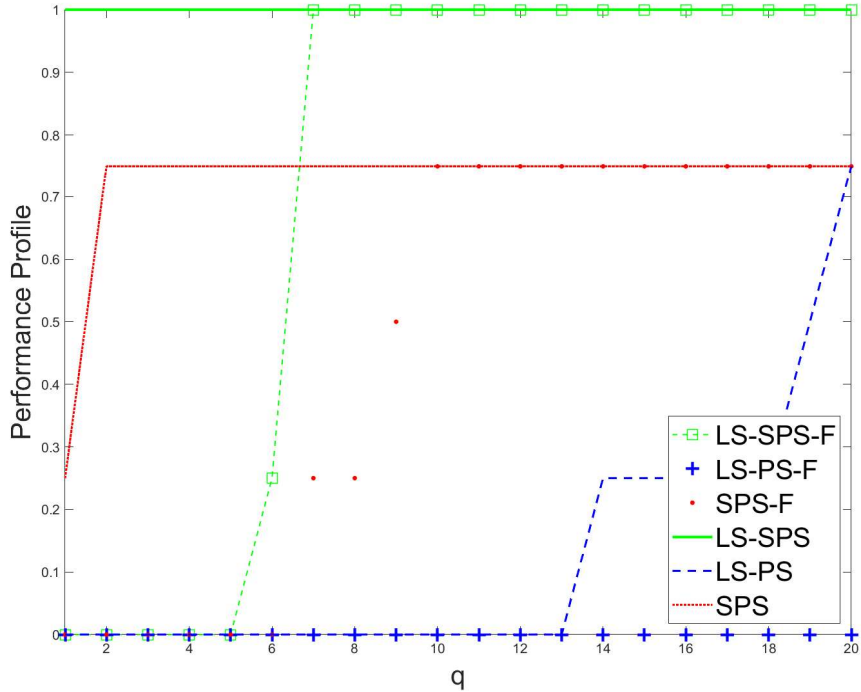}
              \caption{Performance profile for level of accuracy  $\tau=1$}\label{tau1_fig}
\end{figure}

\subsection{Additional comparison}

In this subsection we show additional numerical results in order to compare proposed algorithm with the proximal bundle method (PBM). It is known that PBM gives the best result under a fixed number of iterations. The reason behind this is the fact that the number of constraints in the quadratic program solved by PBM may grow linearly with the number of iterations \cite{senior}. Accordingly, PBM may become significantly slower when the number of iterations become larger. For that reasons, we compare fixed number of iterations of LS-SPS-F with PBM that use full sample size in all iterations, and after that LS-SPS with VSS PBM (where the sample is changed in the same way as in LS-SPS through iterations). 

In order to ensure the fair comparison, we choose several different combinations of initial parameters for PBM. Table \ref{tabela1} summarizes the properties of the observed methods, where $\gamma$ is proximity control parameter, $m$ is descent coefficient, $\epsilon$ is tolerance parameter and $\omega$ is decay coefficient.
Detailed information about these parameters and implementations in Matlab of PBM  are available at \cite{kod}.

The Figure \ref{fig_bundle1} shows the results for Full and VSS version of LS-SPS and PBM algorithms on MNIST data set, while the results on other three data sets are similar. The objective function $f(x_k)$ is plotted against the iteration $k$ and the y-axes are in logarithmic scale. The results show that the proposed algorithms (LS-SPS-F and LS-SPS) outperform the observed PBM counterparts.
\begin{table}[h!]
\begin{center}
 \begin{tabular}{||c c c c c c ||} 
 \hline
  PBM &  $1$ & $2$ & $3$ & $4$ & $5$\\ [0.5ex] 
 \hline\hline
$\gamma$ &	1	&1	&1&	1	&1
\\ 
 \hline
 $m$ & 0.01	&0.1&	0.01&	0.01&	0.01
\\ 
 \hline
 $\epsilon$&	0.1&	0.1&	0.01&	0.1&	0.1
\\
 \hline
 $\omega$ &	0.5	&0.5	&0.5&	0.9&	0.1
\\ [0.5ex] 
 \hline
\end{tabular}
\caption{The initial parameters for PBM.}\label{tabela1}
\end{center}
\end{table}

\begin{figure}[htbp]
    \hspace*{-0.3in}
  \includegraphics[width=0.45\textwidth,angle = 270]{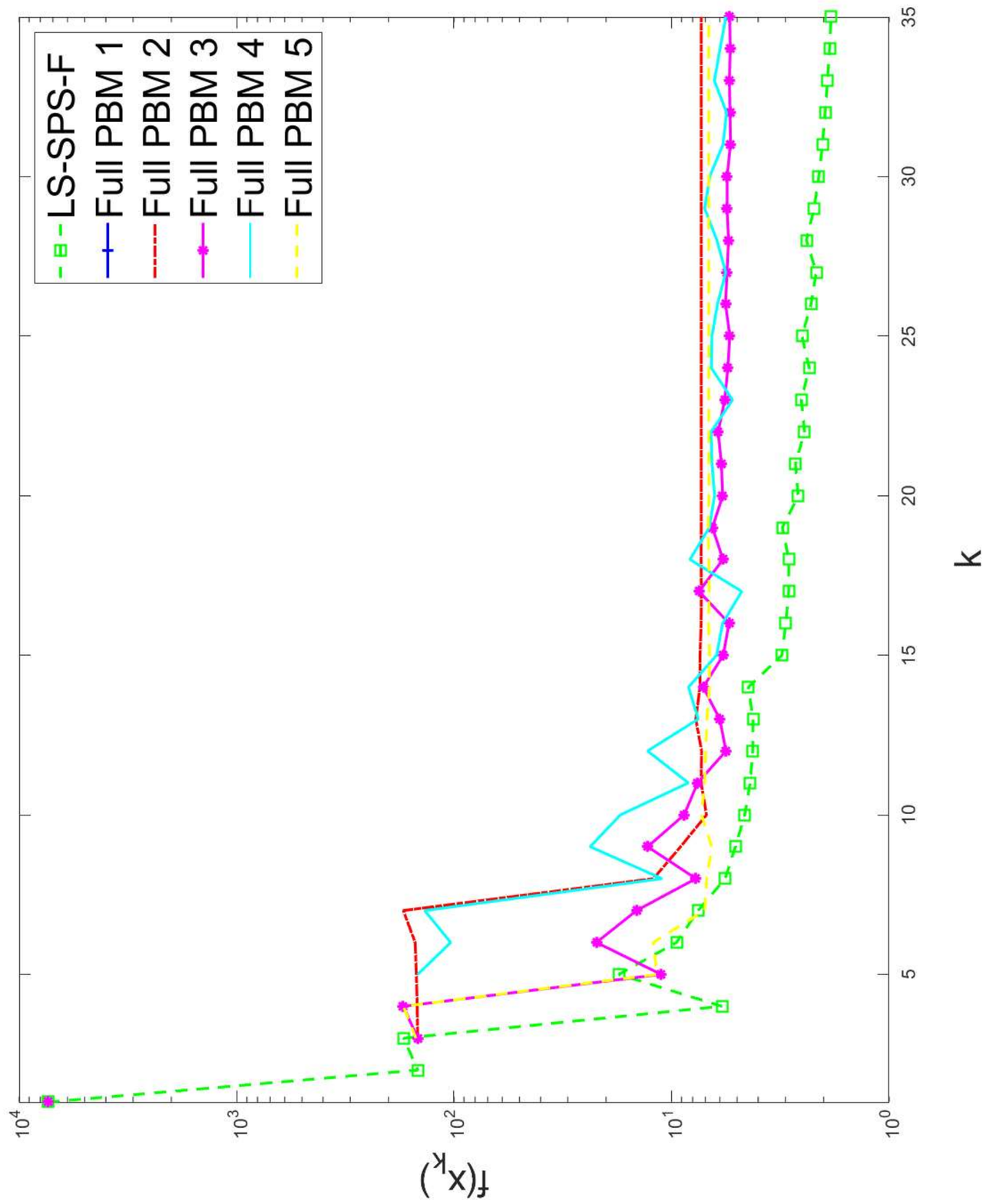} \hspace*{-0.3in}
   \includegraphics[width=0.45\textwidth,angle = 270]{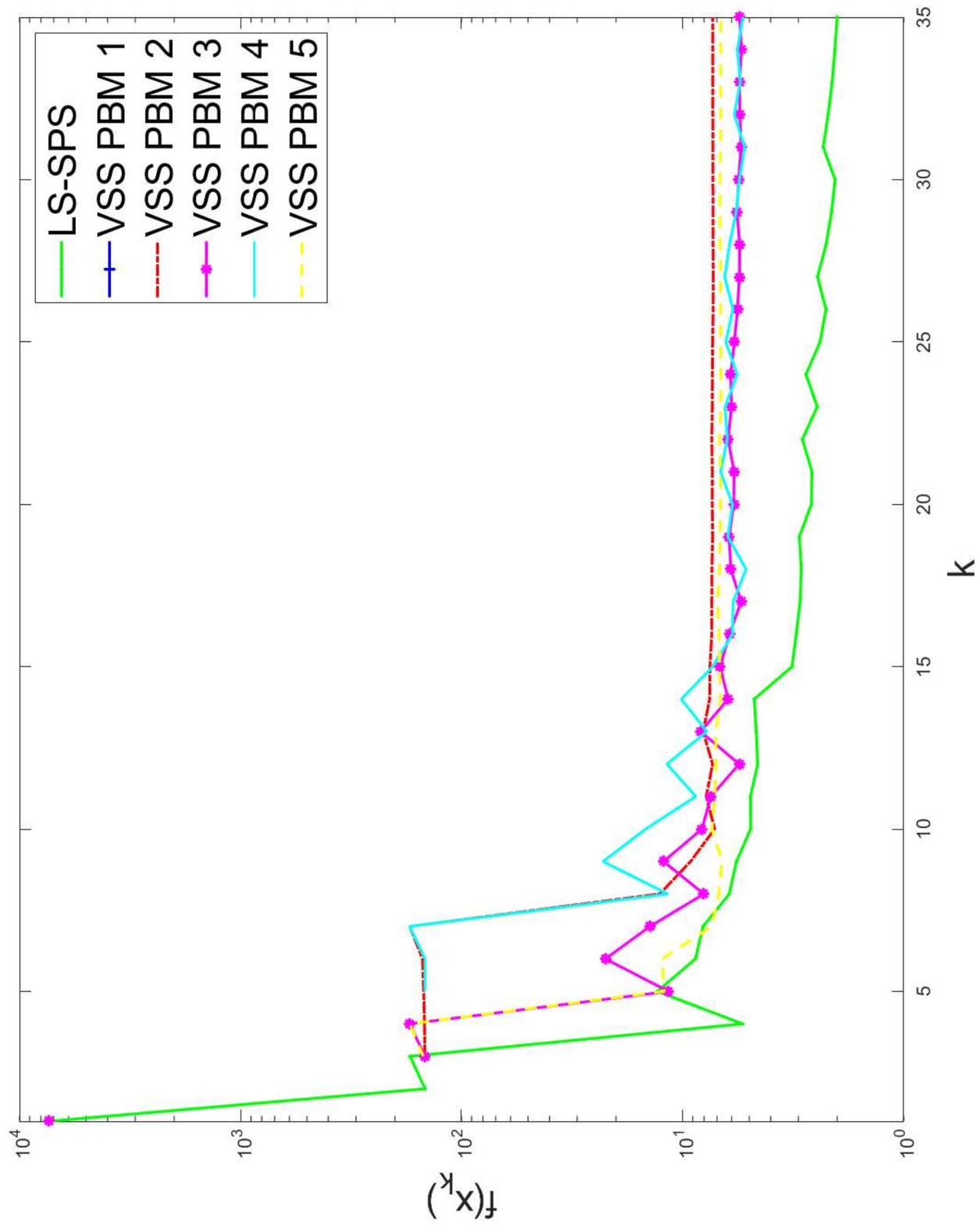}
   \\
   \caption{LS-SPS-F against Full PBM (left) and  LS-SPS against VSS PBM (right). MNIST data set.}
   \label{fig_bundle1}
\end{figure}

\section{Conclusions}

The  method SPS for solving constrained optimization problems with a nonsmooth objective function in the form of mathematical expectation is proposed. We assume that the feasible set is easy to project on and employ orthogonal projections in order to ensure feasibility of the iterates. The  SAA  is used to estimate the objective function and Variable Sample Size strategy is applied.  SPS combines an SAA subgradient with the spectral coefficient in order to provide a suitable direction.  The step sizes are chosen from the predefined interval, while the choice is further specified in the line search version - LS-SPS. The almost sure convergence of the method is proved under the standard assumptions in stochastic environment. We provide convergence analysis both for bounded and unbounded sample sizes. In the later case we show that the sequence of iterates is bounded if the sample size growth is fast enough. We also analyze the important special case - finite sum problems, for which we prove deterministic convergence under the reduced set of assumptions. Numerical experiments are conducted on the set of machine learning problems modeled by the Hinge Loss binary classification. The methods are compared through performance profile type of graphs where the main criterion is the computational cost modeled by the number of scalar products. The  initial results yield several conclusions, including the one that VSS outperforms the full sample as expected. We also conclude that the spectral coefficient is beneficial, but it achieves the best performance when combined with the Armijo-like line search procedure. Additionally, numerical results show clear advantages of the proposed LS-SPS method with respect to  Proximal Bundle Method. Since the current results are promising, the future work should include adaptive VSS schemes, inexact projections and further investigation of the step size selection and the spectral coefficient in stochastic environment. 

\medskip
 \noindent{\bf Acknowledgement.} We are grateful  to the associate editor and the anonymous referee for their comments that helped us to improve the paper. 
\\
{\bf Funding} This work is supported by the Ministry of Education, Science and Technological Development, Republic of Serbia.
\\
{\bf Availability statement} The datasets analysed during the current study are available in the the MNIST database
of handwritten digits \cite{MNIST},  LIBSVM Data: Classification (Binary Class) \cite{SPLiADL} and UCI Machine Learning Repository \cite{MUSH}.
%All data generated or analyzed during this study are included in this published article.\\

\section*{Declarations} 
{\bf Conflict of interest} The authors declare no competing interests.

\end{document}